\numberwithin{equation}{section}
\newtheorem{thm}{Theorem}[section]    	
\newtheorem{lemma}[thm]{Lemma}				
\newtheorem{defi}[thm]{Definition}
\newtheorem{conjecture}[thm]{Conjecture}
\newtheorem{example}[thm]{Example}
\newtheorem{assumption}[thm]{Assumption}
\newcommand*{\un}[1]{\underline{#1}}
\newcommand*{\be}{\begin{equation}}
\newcommand*{\ee}{\end{equation}}
\newcommand*{\ba}{\begin{aligned}}
\newcommand*{\ea}{\end{aligned}}
\newcommand*{\barr}{\begin{array}{c}}
\newcommand*{\earr}{\end{array}}
\newcommand*{\Ev}{{\mathbb{ E}}}
\newcommand*{\Pv}{{\mathbb{ P}}}
\newcommand*{\ind}{\mathds{1}}
\begin{document}






\title[A generalization of Barab\'asi priority model of human dynamics] {A generalization of Barab\'asi priority model of human dynamics}

\author{Júlia Komjáthy}
\address{Júlia Komjáthy, Department of Stochastics, Institute of Mathematics, Technical University of Budapest, 1521
Budapest, P.O.Box 91, Hungary} \email{komyju@math.bme.hu}

\author{K\'aroly Simon}
\address{K\'aroly Simon, Department of Stochastics, Institute of Mathematics, Technical University of Budapest, 1521
Budapest, P.O.Box 91, Hungary} \email{simonk@math.bme.hu}

\author{Lajos Vágó}
\address{Lajos Vágó, Department of Stochastics, Institute of Mathematics, Technical University of Budapest, 1521
Budapest, P.O.Box 91, Hungary} \email{vagolala@math.bme.hu}

 \thanks{2000 {\em Mathematics Subject Classification.} Primary
90B22 Secondary 60K25, 68M20, 60K20.
\\ \indent
{\em Key words and phrases.} Priority queuing, preferential selection protocols, power law waiting time, human dynamics.\\
\indent This research was supported by the grant
 KTIA-OTKA  $\#$ CNK 77778, funded by the Hungarian National
Development Agency (NF\"U)  from a
source provided by KTIA.}


\begin{abstract}Albert-László Barabási introduced a model \cite{Bar} which exhibits the bursty nature of the arrival times of events in systems determined by decisions of some humans. In Barabási's model tasks are selected to execution according to some rules which depends on the priorities of the tasks. In this paper we generalize the selection rule of the A.-L. Barabási priority queuing model. We show that the bursty nature of human behavior can be explained by a model where tasks are selected proportional to their priorities. In addition, we extend some of Vázquez's heuristic arguments \cite{Vaz} to analytic proofs.

\end{abstract}
\date{\today}

\maketitle

\thispagestyle{empty}

\vspace{-0.7cm}

\maketitle

\section{Introduction}

Nowadays, many human-driven phenomena have become momentous in different areas of life, for example in economics and in social sciences. Classical models of human dynamics \cite{Rey, Gre} are based on Poisson processes, so in these models the elapsed time between two consecutive events, the so-called inter-event time is exponentially distributed.
    This is so because in these models there are many people who has effect on the system.
On the contrary, when a single person's actions are considered, in many real cases the inter-event time has a heavy-tailed distribution \cite{Bar}.
 That is the probability that the inter-event time is equal $k$ tends to zero with the speed of
 $k^{-\gamma } $ for a $\gamma >1$. (For precise definition see \cite[Ch. 1]{Remco})
 These heavy tailed distributions give relatively large weight to long inter-event times, and there are bursts of high activity between these inactive intervals. This is in sharp contrast with the exponential distribution, which gives low weight to the longer inter-event times.

Albert-László Barabási introduced a model \cite{Bar} which exhibited the bursty nature of the arrival times of events in systems determined by decisions of some humans. He analyzed it using simulations and found that it is in good agreement with empirical experiences. That is, for example the timing of e-mails sent by a user can be described with Barabási's model: The distribution of the inter-event time between two emails sent by a selected user can be approximated by power-law distributions with exponent close to one, which is also obtained by the model in the $p\to 1$ limit. Vázquez \cite{Vaz} was the first one who studied the model analytically. He confirmed Barabási's results by determining the exact distribution of the priorities and the waiting time at stationarity. He found that in the $p\to 1$ limit the distribution of the waiting time is close to a power-law distribution with exponent one.

In this paper, motivated by the work of Barabási \cite{Bar} and Vázquez \cite{Vaz}, we generalize this model to a more general one, where burstiness occurs as well. This generalization shows that this power-law decay with exponent $1$ can be obtained in sequences of models where tasks with higher priority are more and more likely to be chosen. In addition, we extend some of Vázquez's heuristic arguments to analytic proofs.

\textbf{Barabási's model} is as follows: Somebody has a todo list which always consists of exactly  $L$ items, each of which has a non-negative priority. In every discrete time step ($1,2,\dots$) two things happen:
\begin{itemize}
  \item A task is selected for execution according to a selection protocol described later and leaves the system,
  \item another task arrives to the list.
\end{itemize}
The priority of all arriving task including the tasks at the  $0$-th step are i.i.d. random variables with an absolutely continuous distribution function $R(x)$ on $[0, \infty)$. Let us call a task \emph{new} at time $t$ if it has just arrived to the list. We denote by $N_t$ the priority of the new task and let us introduce
\[
R(x):=\Pv(N_t\le x).
\]
The corresponding density function (DF) is
\[
r(x):=R'(x).
\]
Throughout the paper we assume that
\[
r\in L^2(0,1).
\]
In addition, let us call a task old at time $t$, if it was on the list at time $t-1$ at it is still there at time $t$. We denote its priority by $O_t$ and denote its cumulative distribution function by
\[
R_1(x,t):=\Pv(O_t \le x),
\]
and its density
\[
r_{1}(x,t):=\frac{d}{dx}R_{1}(x,t).
\]

 The selection protocol in  Barabási's model is as follows: Independently in each step independently, we toss a coin with probability of heads $0\leq p\leq 1$. If heads, the task with the highest priority is selected for execution and leaves the system. If tails, a task selected uniformly from the list leaves the system. In particular, if $p=1$, then we use the highest priority first selection protocol, and if $p=0$, then the selection is completely uniform. Let us denote by $\tau$ the waiting time of a task to for execution, that is, the number of steps between its arrival at and its departure from the system. Using computer simulations, Barabási found
 that if $p\to 1$ then
 the waiting time $\tau$  has a power law tail with exponent $1$ . These asymptotics  hold not only for long lists but even if the list consists of two items, that is  for $L=2$. In this case Vázquez \cite{Vaz} proved analytically that
\begin{equation}\label{502}
   \lim_{p\to 1}\Pv (\tau = k)=\begin{cases} 1+O\left( \frac{1-p}{2}\ln(1-p)\right) &\text{ if $k=1$,}\\
  O\left( \frac{1-p}{2}\right)\frac{1}{k-1} &\text{ if $k>1$.}\end{cases}
\end{equation}

In \cite{Vaz}  the expected value of the waiting time was also investigated.
 For the case $L=2$ Vázquez proved analytically  that
 $$
 \Ev (\tau)=
 \left\{
   \begin{array}{ll}
     2, & \hbox{if p<1;} \\
     1, & \hbox{p=1}
   \end{array}
 \right.
 $$
in stationarity. If $p=1$ this means that there is a task stuck in the queue with priority $0$.
For the case $L>2$  Vázquez conjectured that
 \begin{conjecture}[Vázquez]\label{501}
 $$
 \Ev (\tau)=
 \left\{
   \begin{array}{ll}
     L, & \hbox{if p<1;} \\
     1, & \hbox{p=1}
   \end{array}
 \right.
 $$
in  stationarity. If $p=1$  stationarity means that there are $L-1$ tasks stuck.
 \end{conjecture}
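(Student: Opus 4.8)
The plan is to treat $p<1$ and $p=1$ separately, since the priority chain has a stationary regime only in the first case. For $p<1$ I would use a discrete-time conservation-of-flow identity (a Little's law), which has the merit of never needing the explicit stationary law of the priorities — this is presumably why it works for all $L$, whereas V\'azquez's closed-form approach only reached $L=2$. Two ingredients suffice. First, $\Ev\tau<\infty$: at each step the tagged task is the one removed by the ``tails'' (uniform) move with conditional probability $(1-p)/L$, regardless of the past, so $\tau$ is stochastically dominated by a geometric variable with parameter $(1-p)/L$ and $\Ev\tau\le L/(1-p)$. Second, the flow identity: in the stationary regime, fix a step; the tasks on the list occupy the $L$ slots, and for each $j\ge1$ the one that arrived $j$ steps earlier is still present with probability $\Pv(\tau\ge j)$, with at most one task per past step, so taking expectations in the identity ``list size $=L$'' yields $L=\sum_{j\ge1}\Pv(\tau\ge j)=\Ev\tau$. (To avoid assuming stationarity a priori, average the constant list size $L$ over a window $[1,T]$, split by arrival time, and let $T\to\infty$; the initial-condition and end-of-window corrections are $O(L\,\Ev\tau)=O(1)$ and wash out.) Hence $\Ev\tau=L$.

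For $p=1$ there is no stationary distribution; the priorities on the list drift to $0$. I would track the order statistics $X_1^{(t)}\le\cdots\le X_L^{(t)}$ of the priorities present at step $t$. At $p=1$ a step removes the current list maximum, which the next arrival replaces by $N_t\le$ that maximum, and replacing the largest element of a finite multiset by a no-larger value cannot increase any order statistic (a one-line comparison of the counting functions); so each $X_i^{(t)}$ is non-increasing in $t$. Being $\ge0$, each converges a.s., and all limits are $0$: the running minimum of all priorities seen so far stays on the list forever and tends to $\inf\operatorname{supp}R=0$, which handles $X_1^{(t)}$, and one then induces up the ranks, since once ranks $1,\dots,k$ lie within $\ve$ of $0$ every later arrival is ``examined'' at rank $k+1$ and drives it down as well. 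In the limiting configuration all $L$ priorities equal $0$, so a newcomer, whose priority is $>0$ a.s., is the maximum at the very next step and is executed then: $\tau=1$ — precisely the picture ``$L-1$ tasks stuck, one slot turning over each step''.

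The delicate point — and, I expect, the main obstacle — is to make ``$\Ev\tau=1$ in stationarity'' at $p=1$ precise and to prove it. If $\tau_t$ denotes the waiting time of the task arriving at step $t$, the monotonicity above gives $\Pv(\tau_t\ge2)\le\Pv(N_t<X_L^{(t)})=\Ev\bigl[R(X_L^{(t)})\bigr]\to0$, so a newcomer is executed immediately with probability tending to $1$; but on the rare complementary event the newcomer joins the stuck group and its residual wait can be long, so one needs a quantitative control of $\Ev\bigl[\tau_t\,\ind(\tau_t\ge2)\bigr]$ — for instance by dominating the residual wait by the number of steps the running minimum of the incoming priorities needs to fall below the newcomer's own (fixed, positive) priority — strong enough to force the limit down to exactly $1$ rather than to a larger constant. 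Alternatively, and more cheaply, one reads ``stationarity'' here as the limiting all-zero configuration, in which a newcomer is trivially executed after a single step; the genuine content of the $p=1$ case is precisely the passage between these two readings.
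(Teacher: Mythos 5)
For $p<1$ your proof uses essentially the same conservation-of-flow identity the paper uses in Theorem \ref{thm::ergod}: the paper writes it as $\sum_{i=1}^{t}\tau_i + \sum_{l=1}^{L-1}\tau'_{l,t} = Lt$ and shows $\frac{1}{t}\sum_{l}\tau'_{l,t}\to0$; your Little's-law phrasing and window-averaging is the same accounting. Your geometric domination of $\tau$ with parameter $(1-p)/L$ is actually a bit cleaner than the paper's, which dominates the residual times $\tau'_{l,t}$ by a geometric of parameter $((1-p)/L)^{L-1}$ via a renewal structure. The paper goes further and proves the Cesaro average converges a.s.\ and in $L^1$ by establishing ergodicity of the priority chain; your argument delivers only $\Ev\tau=L$, which is what the conjecture actually asserts, so this is a legitimate shortcut rather than a gap.

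For $p=1$ there are genuine problems. First, the monotonicity claim is false as stated: the arrival $N_{t+1}$ is a fresh i.i.d.\ draw and is not bounded by the departing maximum, so the top order statistic (always the newcomer's priority) is not monotone; only the bottom $L-1$ order statistics, which are successive minima over the arrival stream, decrease a.s.\ to $\inf\operatorname{supp}R$. That is a small fix, but the quantitative step you flag as the main obstacle is in fact impassable: $\Ev[\tau_t\,\ind(\tau_t\ge2)]$ is not small, it equals $+\infty$ for every $t$. For $L=2$, $\tau_t\ge2$ exactly when the task arriving at step $t$ is a lower record (probability $1/t$), and conditionally on $T_k=t$ its wait is $\Delta_{k+1}$ with $\Pv(\Delta_{k+1}>m\mid T_k=t)=t/(t+m)$ by Tata's computation, so $\Ev[\Delta_{k+1}\mid T_k=t]=\sum_{m\ge0}\tfrac{t}{t+m}=\infty$. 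Thus there is no way to force $\Ev\tau_t\to1$. The paper's Section \ref{800} accordingly does something different: rather than assigning a value to $\Ev\tau$ at $p=1$, it shows via record theory that the Cesaro average of the $\tau_i$ has no limit — $\liminf\tau'_{1,t}/t=0$ along $t=T_k+1$ while $\limsup\tau'_{1,t}/t$ along $t=T_k$ converges in distribution to $1-Z$ with $Z\sim U[0,1]$ — and that the chain is not irreducible, so V\'azquez's heuristic breaks down. Your closing remark, that "stationarity" at $p=1$ can only be read as the degenerate all-zero configuration, agrees with what the paper (and the conjecture itself, with its "$L-1$ tasks stuck" clause) intends, but that is a redefinition rather than a theorem, and the paper is explicit that this state is never actually reached.
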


Our work is divided into two main parts.
\subsection{Barabási model}
We extend some of the  heuristic arguments of \cite{Vaz}  related to the Barabási model to analytic proofs in Section \ref{303}. We distinguish two cases according to $p<1$ or $p=1$.
\subsubsection{Barabási model with $p<1$}
First we assume that $0\leq p<1$. We extend Vázquez's completely correct heuristic argument to an analytic proof in Section \ref{3031}.
\subsubsection{Barabási model with $p=1$}
Then in Section \ref{800} we consider the case when at every time step the task with the highest priority is selected. Although the relevant part of Vázquez's Conjecture \ref{501} holds but the stationary case will not be achieved, since there will never be any tasks with priority $0$ almost surely. It is of crucial importance to observe that in this case at time $t$ the $L$ items on the list are as follows: Beside the newly added item we have the  $L-1$ lowest priority items among all items which have ever added to the list.  Therefore we can use the theory of the process of records. We study this case for $L=2$  in Section \ref{800}, where all tasks remain on the list for one time step except for the records. An item is called lower record (from which we usually omit to say the adjective lower) if it has the lowest priority at the time when it is added to the list. When such a record arrives, then it remains on the list until a new record arrives. The time elapsed between the $n$-th and $n+1$-th records called the $n$-th inter-record time. It is well known that the expected value of the inter-record times grow exponentially with $n$. Therefore simulations may indicate the false conclusion that one task remains indefinitely in the queue \cite{Vaz} \cite{Bar}.
\subsection{Generalization of the Barabási model}
In Section \ref{420} We study the Barabási-model for general priority-based selection protocols in the case $L=2$. We analyze the two main characteristics of this model at stationarity, namely the distribution of the priorities of tasks on the list and that of $\tau$, the waiting time a task spends on the list before execution. To explain these with more detail, we need to introduce some new notation. At time $t+1$ there are two items on the list. The one which has just arrived to the list we call \emph{new}, and the one which was already on the list at time $t$ we call it \emph{old} task. The distribution of the new task is given by $R(x)$. Our goal is to understand the distribution of the old task. The selection protocol is described by the function $v$ in the following way:
\[
  v(x,y):=\Pv (\text{new is chosen} \mid \text{new has priority $x$, old has priority $y$}),
\]
where $\forall y\ v(.,y)$ is increasing and $\forall x\ v(x,.)$ is decreasing. Furthermore we assume that $\forall x\ \forall y\ v(x,y)\leq c_1 < 1$ with some constant $c_1$.

We use Vázquez's notation, and also refer to the results in \cite{Vaz} for the \emph{Barabási case}, when $v(x,y)=p\ind_{x>y}+\frac{1-p}{2}$ (the model introduced by Barabási \cite{Bar}). An other natural example is as follows.
\begin{example}\label{321}
Let $v$ be $v(x,y)=p\frac{x}{x+y}+\frac{1-p}{2}$, $0\leq p\leq 1$ and the priority of the new task is uniformly distributed on $[c,1]$, $0<c<1$. That is, first we toss a biased coin (heads with probability $p$). If heads, we select a task for execution proportionally to the priorities, if tails, the selection is done uniformly. This model takes into account not only the order of priorities, but also the proportion of those.
\end{example}

If tasks are interpreted as being competitors and the priorities as levels of talent of these competitors, then Example \ref{321} can be explained as follows: In a game, every competitor entering the game have to play until he wins for the first time. We also know that the outcome of the games depends on the level of talents as it is determined by the selection protocol of Example \ref{321}. That is, with probability $p$ the chances to win are proportional to the level of talents of the two competitors, and with probability $1-p$ the competitors win with $\frac12$-$\frac12$ probability. The a priori distribution of the talent of a player is uniform on $[c,1]$, and the game has been going on for a long time. We are interested in the distribution of the level of talent of a player about whom we only know that he has just lost a game. Then the corresponding density function $r_1(x)$ is illustrated in Section \ref{4002}, see Fig. \ref{410}.

Very useful notations are the following ones. The probability that the new task is selected given the old task has priority $s$ is
\be\label{507}
  q(s)=\int_{0}^{1}{v(y,s)dR(y)},
\ee
and the probability that the new task is selected given the old task has priority $s$ is
\[
  q_1(s,t)=\int_{0}^{1}{(1-v(s,y))dR_1(y,t)}.
\]
With these notations one can easily describe the evolution of $R(x,t)$. Assume that at time $t+1$ the old task is $T^*$. Then either
\begin{enumerate}
	\item[(a)] $T^*$ was the old task at time $t$, or
	\item[(b)] $T^*$ was the new task at time $t$.
\end{enumerate}
In the first case (a), the new task had to be selected for execution in step $t$, so that $T^*$ remained in the system, and in the other case (b) the old task had to be selected so that $T^*$ remained in the system. Hence law of total probability gives us
\[
  R_{1}(x,t+1)=\underbrace{\int_{0}^{x}{r_{1}(s,t)q(s)ds}}_{case(a)} +\underbrace{\int_{0}^{x}{r(s)q_{1}(s,t)ds}}_{case(b)}.
\]
Assuming the system is in stationary state, the probability that the old task is selected given the new task has priority $s$ is
\be\label{5071}
  q_1(s)=\int_{0}^{1}{(1-v(s,y))dR_1(y)},
\ee
and for the stationary CDF $R_1(x)$ and DF $r_1(x)$ of the priority of the old task we obtain the stationary equation
\be\label{508}
  R_{1}(x)=\int_{0}^{x}{r_{1}(s)q(s)ds}+\int_{0}^{x}{r(s)q_{1}(s)ds}.
\ee
In the very special \emph{Barabási case} $R_1(x)$ can be explicitly computed \cite{Vaz}. However, in the general case the function $R_1(x)$ cannot be expressed by a closed formula. We will use Hilbert\--\ Schmidt operator techniques to find an approximation of $R_1(x)$.

As far as the waiting time is concerned, the distribution of $\tau$ is:
\be \label{310}
  \mathbb{P}(\tau=k)=
  \begin{cases}
    \int_{0}^{1}{(1-q_{1}(x))dR(x)} & \text{if $k=1$,}\\
    \int_{0}^{1}{q_{1}(x)(1-q(x))q(x)^{k-2}dR(x)} & \text{if $k>1$.}
  \end{cases}
\ee
If $k=1$, then the task has to be executed immediately, so the probability of this is exactly $1-q_{1}(x)$ if the task has priority $x$. Otherwise, if $k>1$, then the event that a task with priority $x$ stays exactly $k$ steps on the waiting list is the intersection of $k$ conditionally independent events. When the task is added to the list, it must not be executed, this happens with probability $q_{1}(x)$, then it becomes old and has to stay $k-2$ more steps on the list, the probability of this event is $q(x)^{k-2}$, and finally, the task has to be executed, which event has probability $1-q(x)$.

In the \emph{Barabási case} these integrals can be explicitly computed \cite{Vaz}, hence we obtain
\[
  \mathbb{P}(\tau=k)=
  \begin{cases}
    1-\frac{1-p^{2}}{4p}ln\frac{1+p}{1-p} & \text{if $k=1$,}\\
    \frac{1-p^{2}}{4p}\left(\left(\frac{1+p}{2}\right)^{k-1}-\left(\frac{1-p}{2}\right)^{k-1}\right)\frac{1}{k-1} & \text{if $k>1$.}
  \end{cases}
\]
In the limit $p\to 1$ it leads to \eqref{502}.

In the general case only the expected waiting time can be explicitly computed, see Section \ref{3031}.

\section{Probabilistic interpretation of $R_1(x)$ in the Barabási case}\label{306}

Consider the Barabási case with $p<1$. As we mentioned earlier, Vázquez  \cite{Vaz}
computed $R_1(x)$ in the Barabási case.
In order to give a better understanding of the nature of the model,
 here we provide an alternative, probabilistic way to compute the CDF $R_1(x)$.
Part of this  method will be used when we prove Conjecture \ref{501}.

\medskip

To get the distribution function $R_1(x)$ of the old task in a stationary system, first extend the dynamics backwards in time such that the values of the system at $-1,-2, \dots$ are defined to maintain stationarity. Further, let us suppose that the process is stationary at time $T$.
 Starting from this $T$, we are looking backwards in time and count how many other tasks had an old task to compete with in order to be able to stay in the list until $T$. We define the following three events:
\begin{itemize}
  \item
  $
  R_{\mathrm{new}} :=\big\{
  \mbox{The selection protocol turns out to be the random choice and}\\
  \mbox{ a new task is executed.}
  \big\}
  $\\
In this case the new task leaves the list immediately. Thus, the old task does not have to compete with its priority at all, so we do not have to count these cases. This event happens in each step with probability $\frac{1-p}{2}$.
  \item
  $
  R_{\mathrm{old}}:=\big\{
  \mbox{Random protocol is chosen, and the old task is selected.}
  \big\}
  $\\
In this case the arriving new task becomes old and the old task before it leaves the system. Since the priority distribution of the new task is just $R(x)$, considering the old task's priority distribution, the system restarts at each event of this type. This event happens in each step with probability $\frac{1-p}{2}$.
  \item
  $
  R_{\mathrm{comp}}:=\big\{
  \mbox{Priority selection.}
  \big\}
  $\\
This third event happens with probability $p$.
\end{itemize}
Thus the process can be coded by sequences of type $\{R_{\mathrm{new}}, R_{\mathrm{old}}, R_{\mathrm{comp}}\}^{\mathbb Z}$.

Let $X_t$ be the $t$-th element of this sequence. Then the renewal at every $R_{\mathrm{old}}$ event can be formalized as follows: Using the fact that the sequence $\{ X_t\}_{t=1}^{\infty}$ is independent of the priorities, for every $x\in \mathbb{R}$
\[
\Pv(O_{t+1} < x \mid X_t=R_{\mathrm{old}}) = \Pv( N_t < x \mid X_t=R_{\mathrm{old}}) = \Pv( N_t < x)=R(x),
\]
where recall that $O_t$ and $N_t$ stands for the priority of the \emph{old} and the \emph{new} task at time step $t$, respectively.

Since the priority distribution of the old task is renewed at every event $R_{\mathrm{old}}$, in order to determine the priority distribution of the old task in the system for a big enough $T$  , we have to count how many $R_{\mathrm{comp}}$-s we had after the last $R_{\mathrm{old}}$ event.

It is easy to see that the distribution of the arrival time of the last $R_{\mathrm{old}}$ event is just $T-GEO\left( \frac{1-p}{2}\right)+1$. Similarly, if we only want to count the number of $R_{\mathrm{comp}}$-s after the last $R_{\mathrm{old}}$, we have to re-normalize the probabilities to exclude cases of $R_{\mathrm{new}}$, so it's distribution is
\[
GEO\left( \frac{\frac{1-p}{2}}{p+\frac{1-p}{2}}\right) -1= GEO\left( \frac{1-p}{1+p}\right) -1.
\]

Thus, the old task in the system at time $T$ had to compete with the other tasks at each event $R_{\mathrm{comp}}$ and had to "win", i.e its priority had to be less than all of them, in order to stay in the system up to time $T$. So, its priority had to "defeat" $GEO\left( \frac{1-p}{1+p}\right)-1$ many other tasks to stay in the list so far. Hence the distribution we were looking for is the minimum of $X\sim GEO\left( \frac{1-p}{1+p}\right)$ many independent random variables ($\{ Y_{i}\} _{i=1}^{X}$), of distribution $R(x)$. Let us compute it's CDF using the law of total probability:
\[ \ba
F(x)&=\mathbb{P}(\min_{i=1 \dots X} Y_i < x)= \sum_{k=1}^{\infty}{\mathbb{P}(\exists i \le X: Y_{i}\leq x \mid X=k)\mathbb{P}(X=k)}\\
&=\frac{1+p}{2}\left( 1-\frac{1}{1+\frac{2p}{1-p}R(x)}\right),
\ea \]

in agreement with \cite{Vaz}, since the summation converges if $p<1$. A comment is that we only used the extension to doubly infinite stationary sequence since the geometric variable in this proof is unbounded.

\section{Records and expected waiting time}\label{303}
\subsection{Ergodicity for $p<1$}\label{3031}
In this section we verify Conjecture \ref{501} which gives us the expected waiting time in the Barabási case. Let us first set $p<1$ and $L\geq 2$. We use the method suggested by Vázquez \cite{Vaz}, and work out the details. Using the arguments introduced by Vázquez let us denote by $\tau_{i}$ the waiting time of the task executed at step $i$, and by $\tau_{l,t}'$ ($l=1,\dots,L-1$) the resident times of the tasks that are still in the buffer at step $t$. Then we summarize the main point in the following theorem:

\begin{thm}\emph{[Ergodic Theorem for $L\ge 2, p<1$.]}\label{thm::ergod}
The priority queuing system in the Barabási case is ergodic for any $L\geq2$ and $p<1$ and the following limit holds:
\be \label{311}
  \Ev_L (\tau) = \lim_{t\to \infty }\frac1t \sum_{i=1}^{t}{\tau_{i}} = L \quad a.s. \text{ and in } L_1.
\ee
\end{thm}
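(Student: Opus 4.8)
The plan is to set up a renewal structure on the sequence of time steps and apply a renewal-reward / ergodic theorem to the reward sequence given by the waiting times $\tau_i$. First I would observe that, since $p<1$, at every step the selection is uniform with probability at least $1-p>0$; in particular, with probability $\tfrac{1-p}{L}$ (or a similar constant depending on $L$) the task that has just arrived at step $t$ is itself selected and leaves immediately. Call such a step a \emph{regeneration step}. At a regeneration step the task that arrived at step $t$ departs at step $t$, so nothing about the past priority configuration matters for the future: the list at step $t+1$ consists of $L$ fresh tasks (the survivors plus the new arrival), but crucially the joint law of the \emph{future} waiting times of all tasks currently on the list, conditioned on a regeneration at $t$, does not depend on $t$ or on the history before $t$. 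I would make this precise by checking that regeneration times form a renewal process with geometrically-bounded (hence finite-mean) inter-regeneration gaps, and that the blocks of $(\tau_i)$ between consecutive regenerations are i.i.d. (or at least stationary and ergodic).

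Next I would invoke the renewal-reward theorem (or Birkhoff's ergodic theorem applied to the induced stationary sequence) to conclude that $\frac1t\sum_{i=1}^t \tau_i$ converges a.s.\ and in $L_1$ to $\Ev[\text{reward over a block}]/\Ev[\text{block length}]$, provided the expected reward per block is finite — which follows because the number of tasks processed in a block has exponential tails and each $\tau_i$ is dominated by a geometric variable (the task survives each step with probability at most $c_1<1$, or in the Barab\'asi case at most $\tfrac{1+p}{2}<1$), so $\Ev[\tau_i]<\infty$ uniformly. This also yields ergodicity of the system in the sense claimed. The identification of the limit with $L$ is then a conservation (flow-balance) argument: over a long time window $[1,t]$, exactly $t$ tasks arrive and $t$ depart, while the buffer always holds $L$ tasks; summing the resident times, one gets
\be
  \sum_{i=1}^{t}\tau_i \;=\; \sum_{\text{tasks present during }[1,t]} (\text{time spent in }[1,t]) \;=\; Lt + (\text{boundary terms}),
\ee
where the boundary terms count the residual times of the $L$ tasks present at time $1$ and at time $t$; dividing by $t$ and letting $t\to\infty$, the boundary terms vanish (each is a.s.\ finite, being bounded by a sum of $L$ geometric-type variables), giving the limit $L$.

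The main obstacle I expect is making the regeneration argument genuinely clean: one must argue that a regeneration step really does decouple future from past \emph{including the priorities currently sitting in the buffer}, because those old priorities are biased (they are ``winners'' of past competitions) and do influence future dynamics. The fix is that a regeneration as I defined it (new task selected under the uniform rule) does not by itself reset the buffer priorities; so instead I would either (i) define a stronger regeneration event that also controls the buffer — e.g.\ wait for a run of $L$ consecutive uniform-and-new-selected steps, which flushes the entire buffer and still has positive probability and geometric tails — or (ii) avoid exact renewals altogether and instead show directly that the bivariate chain (buffer priority configuration) has a unique stationary distribution by a Doeblin / minorization argument using the uniform part of the selection, then apply the ergodic theorem for stationary ergodic sequences to $(\tau_i)$ viewed as a functional of this chain. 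Option (ii) is probably the more robust route and is the one I would write up, with the flow-balance computation above as the final step identifying the limit.
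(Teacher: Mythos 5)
Your overall strategy (the flow-balance identity $\sum_{i\le t}\tau_i + \text{boundary} = Lt$, combined with a regeneration argument to kill the boundary term and establish ergodicity of the underlying chain) is exactly the one the paper uses, and your option~(ii) is essentially the paper's proof: the authors show the Markov chain $\underline{M}_t$ of the $L-1$ oldest buffered priorities is aperiodic and renews infinitely often, hence irreducible and ergodic, and that each residual time $\tau'_{l,t}$ is stochastically dominated by a geometric variable (with success probability $(\tfrac{1-p}{L})^{L-1}$), so that $\tau'_{l,t}/t\to 0$ a.s.\ and in $L_1$.

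However, your option~(i) contains a genuine error that you should not leave as a viable alternative: a run of $L$ consecutive steps in which the \emph{newly arrived} task is chosen under the uniform rule does \emph{not} flush the buffer. Selecting the new task removes only that task and leaves all $L-1$ old tasks untouched, so after such a run the buffer still contains exactly the same (biased) survivors it held before the run; no regeneration has occurred. The correct flushing event, and the one the paper uses, is $L-1$ consecutive steps in which the uniform rule selects the \emph{oldest} task in the buffer (each such step has probability $\tfrac{1-p}{L}$). After that run every task currently in the buffer arrived during the run and has an i.i.d.\ $R$-distributed priority, which is the actual renewal. You had already diagnosed precisely the right worry — that the survivors' priorities are biased by past competitions — but the proposed fix goes in the wrong direction. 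Replacing ``new-selected'' by ``oldest-selected'' (and $L$ by $L-1$) makes option~(i) correct and essentially identical to the argument in the paper; as written, you should discard option~(i) and rely on option~(ii), which is sound.
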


\begin{proof}
Since the total buffer time until time $t$ is $Lt$ and in each time step exactly one task leaves the system, we have that
\[
  \sum_{i=1}^{t}{\tau_{i}}+\sum_{l=1}^{L-1}{\tau_{l,t}'}=Lt.
\]
Hence
\be \label{308}
  \lim_{t\rightarrow \infty}\frac{1}{t}\sum_{i=1}^{t}{\tau_{i}} =L-\lim_{t\rightarrow\infty}\frac{1}{t}\sum_{l=1}^{L-1}{\tau_{l,t}'}
\ee
if these limits exist. Similarly to the $L=2$ case in Section \ref{306}, the process renews whenever there are $L-1$ successive $R_{\mathrm{old}}$ events, where $R_{\mathrm{old}}$ means that the oldest task is selected for execution because of the random selection. For every $t$ the probability that there is a renewal section of the process starting at the step $t$ is $\left(\frac{1-p}{L}\right)^{L-1}$. In addition, two renewal sections starting at time $t_1$ and $t_2$ are independent if $|t_1-t_2|\geq L-1$. So the system renews infinitely often a.s.. (This implies also the existence of a stationary distribution for the $L-1$ not newly added task's priority.)
Analogous to the argument in Section \ref{306}, each remaining time $\tau_{l,t}'$ can be stochastically dominated by a geometric random variable with success probability $(\frac{1-p}{L})^{L-1}$. Thus,  $\frac{\tau_{l,t}'}{t} \to 0$ as $t\to \infty$ holds for any $l$ almost surely (a.s.) and in $L_1$, and so we obtain
\be\label{312}
  \lim_{t\rightarrow\infty}\frac{1}{t}\sum_{l=1}^{L-1}{\tau_{1,t}'}=0 \text{ a.s. and in $L_1$}.
\ee

There remains the proof of ergodicity. Let us denote by $\un M_t= (M^1_t, \dots, M^{L-1}_t)$ the $L-1$ dimensional vector of the oldest elements in the buffer at time $t$, such that $M^1_t$ is the priority of the oldest, $M^i_t$ is the priority of the $i$-th oldest task in the system. Then clearly the sequence $\{ \un{M}_t\}_{t=1}^{\infty}$ is Markovian with state space $[0,1]^{L-1}$. Moreover, it is clearly aperiodic, and since the system renews infinitely often a.s., it is irreducible. Thus ergodicity and equations \eqref{308} and \eqref{312} immediately imply \eqref{311} as it was suggested by Vázquez's heuristic arguments \cite{Vaz}.
\end{proof}

\subsection{Records}\label{800}

In this section we investigate the case $p=1, L=2$ , i.e. the case when the selection protocol has no randomness and always picks the task in the system with higher priority. We show that the system achieves no stationary distribution in this case.

It is easy to see that when a task arrives whose priority is less than all the priorities before, then the old task in the system will be chosen for execution and this task will remain in the buffer. This implies that $R_{1}(x,t)$ (the distribution of the old task's priority at time $t$) is the distribution of the minimum, i.e. the lower record value of the first $t+2$ tasks' priority.  That is, we need to consider the minimum of $t+2$ i.i.d. random variables with distribution $R(x)$.

On the other hand, a stationary distribution $R_1^s(x)$ should satisfy equation \eqref{508}, which has only a degenerate solution $R_{1}(x)=0$ a.s. if $p=1$.
Thus, the stationary distribution $R_1^s(t)$ will never be achieved, since
\[
\mathbb{P}(\forall t\ R_{1}(x,t)\neq 0)=1.
\]
Hence, the properties of the waiting time $\tau$ are related to the process of records.

In order to introduce some classical theorems of record theory (for an introduction see \cite[p. 22-28]{ABN}), we will need some definitions.
\begin{defi} Let $\{ X_{t}\}_{t=1}^{\infty}$ be i.i.d. random variables with absolutely continuous CDF $F$. Then we define the followings:
\begin{enumerate}
	\item The sequence $\{ T_{k}\}_{k=1}^{\infty}$ of \emph{lower record times} is defined by
	\[\begin{aligned}
	T_{1}&=1, \text{and for $k>1$}\\
	T_{k}&=\min\{ t \mid X_{t}<X_{T_{k-1}}\}.
	\end{aligned}\]
	\item The sequence $\{ \Delta_{k}\}_{k=2}^{\infty}$ of \emph{inter-record times} is defined by
	\[
	\Delta_{k}=T_k-T_{k-1}.
	\]
	\item The sequence $\{ I_{t}\}_{t=1}^{\infty}$ of \emph{record indicators} is defined by
	\[\begin{aligned}
	I_{1}&=1, \text{and for $t>1$}\\
	I_{t}&=\mathds{1}\{X_t<\min\{ X_1,\dots,X_{t-1}\}\}.
	\end{aligned}\]
	\item The sequence $\{ x_{k}\}_{k=1}^{\infty}$ of \emph{lower record values} is defined by
	\[
	x_{k}=X_{T_{k}}.
	\]
\end{enumerate}
\end{defi}

If we do not care about the record values, without loss of generality one can assume that the distribution of the $X_i$-s is $U[0,1]$. Thus one can easily compute the joint distribution of the $I_t$-s. Let $n,1=t_1<t_2<\dots <t_n$ be arbitrary positive integers. Then
\[ \begin{aligned}
\Pv(I_{t_1}&=1,\dots,I_{t_n}=1)\\
&=\int_{0<x_1<\dots<x_n<1}{\Pv(I_{t_1}=1,\dots,I_{t_n}=1 \mid X_{t_1=x_1},\dots,X_{t_n=x_n})}dx_1\dots dx_n\\
&=\int_{0<x_1<\dots<x_n<1}{x_1^{t_2-1}x_2^{t_3-t_2-1}\dots x_n^{t_n-t_{n-1}-1}dx_1\dots dx_n}\\
&=\frac{1}{t_1}\frac{1}{t_2}\dots \frac{1}{t_n}.
\end{aligned} \]
Since the integers $n,1=t_1<t_2<\dots <t_n$ were arbitrary, and the $I_t$-s are discrete random variables, it is evident that these random variables are independent, and $I_t\sim BER\left(\frac1t\right)$ is a Bernoulli random variable.

Now consider the ratio $\frac{\Delta_k}{T_k}$. The distribution of $\frac{\Delta_k}{T_k}$ conditioned on the value of $T_{k-1}$ can be determined by Tata's reasoning \cite{Tata}:
\[ \begin{aligned}
\Pv\left(\frac{\Delta_k}{T_k}>x \;\Big\vert\; T_{k-1}=t\right) &=\Pv\left(\frac{\Delta_k}{t+\Delta_k}>x \;\Big\vert\; T_{k-1}=t\right) \\
&=\Pv\left( \Delta_k>\frac{x}{1-x}t \;\Big\vert\; T_{k-1}=t\right)\\
&=\Pv\left(I_{t+1}=0,\dots,I_{	\lfloor t/(1-x)	 \rfloor}=0\right)\\
&=\frac{t}{\lfloor t/(1-x)\rfloor}.
\end{aligned} \]
Since $T_k \to \infty$ a.s. as $k \to \infty$, hence
\[
\lim_{k \to \infty} \Pv\left(\frac{\Delta_k}{T_k}>x\right)=1-x,
\]
so thus $\frac{\Delta_k}{T_k}$ and $\frac{T_{k-1}}{T_k}$ are asymptotically $U[0,1]$.

Now we turn back to the Barabási model in the case $p=1$ and $L=2$. Equation \eqref{308} holds also in this situation:
\[
  \lim_{t\rightarrow \infty}\frac{1}{t}\sum_{i=1}^{t}{\tau_{i}}=2-\lim_{t\rightarrow \infty}\frac{\tau_{1,t}'}{t}
\]
if these limits exist. But now, with the help of record theory one can prove that the limit on the right hand side does not exist. On the one hand if we choose $t = T_k +1$ then we obtain:
\[
  \liminf_{t\rightarrow \infty}\frac{\tau_{1,t}'}{t}=0 \text{\ a.s.,}
\]
since the sequence of records is infinite a.s. (This is ensured by the continuity of the CDF $F$), so thus $\tau_{1,t}'=0$ will hold for infinitely many $t$-s. On the other hand the $limsup$ is taken if $t=T_k$ (we look the subsequence of records):
\[ \begin{aligned}
\limsup_{t\rightarrow \infty}\frac{\tau_{1,t}'}{t}&=\lim_{k\to \infty}\frac{\Delta_{k-1}}{T_k}=\lim_{k\to \infty}\frac{T_k-T_{k-1}}{T_k}\\
&=1-\lim_{k\to \infty}\frac{T_{k-1}}{T_k}=1-Z,
\end{aligned} \]
where $T_k$ and $\Delta_k$ are the $k$-th record and inter-record times, and $Z$ is a $U[0,1]$ random variable.

Thus, Vázquez's heuristics does not work in this case. Note that the dynamics is not even ergodic, since the irreducibility does not hold, as the priority of the old task decreases monotonically.

Further, we can state precise asymptotic results about the waiting time of the records in the queue.
It was proven by  Holmes and Strawderman \cite{HS} that the Strong Law of Large Numbers (SLLN), by Neuts \cite{Neu} that the Central Limit Theorem (CLT) and by Strawderman and Holmes \cite{SH} that the Law of Iterated Logarithm (LIL) hold for the series of $\ln T_{k}$ and $\ln \Delta_{k}$ as well:
\begin{thm}
Let $z_{k}$ be either $\Delta_{k}$ or $T_{k}$. Then
\begin{enumerate} \item \emph{[SLLN]\cite{HS}}
\[
\frac{\ln z_{k}}{k}\stackrel{a.s.}{\rightarrow} 1 \text{\ as $k\rightarrow \infty$}.
\]
\item \emph{[CLT]\cite{Neu}}
\[
\frac{\ln z_{k}-k}{\sqrt{k}}\Rightarrow N(0,1) \text{\ as $k\rightarrow \infty$}.
\]
\item \emph{[LIL]\cite{SH}}
\[\begin{aligned}
\limsup_{k\rightarrow \infty} \left\{ \frac{\ln z_{k}-k}{\sqrt{2k\ln\ln k}}\right\} &=1 \text{\ a.s.},\\
\liminf_{k\rightarrow \infty} \left\{ \frac{\ln z_{k}-k}{\sqrt{2k\ln\ln k}}\right\} &=-1 \text{\ a.s.\ }.
\end{aligned}\]
\end{enumerate}
\end{thm}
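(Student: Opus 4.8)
The plan is to derive all three parts from the record-indicator description already set up above. Recall that the indicators $\{I_t\}$ are independent with $I_t\sim BER(1/t)$, so the record count $C_t:=\sum_{i=1}^{t}I_i$ has $\Ev C_t=H_t=\ln t+O(1)$ and $\Vv(C_t)=\sum_{i=1}^{t}(\tfrac1i-\tfrac1{i^2})=\ln t+O(1)$, while $T_k=\min\{t:C_t=k\}$ is the generalized inverse of $C$, so that $\{T_k>t\}=\{C_t<k\}$ for all $k,t$. First I would establish the limit theorems for $C_t$ itself: since the $I_i$ are independent and uniformly bounded with $\Vv(C_t)\to\infty$, the Lindeberg condition is automatic and $(C_t-\ln t)/\sqrt{\ln t}\Rightarrow N(0,1)$, and the same two facts allow Kolmogorov's law of the iterated logarithm for sums of independent uniformly bounded variables, giving
\[
\limsup_{t\to\infty}\frac{C_t-\ln t}{\sqrt{2\ln t\,\ln\ln\ln t}}=1,\qquad\liminf_{t\to\infty}\frac{C_t-\ln t}{\sqrt{2\ln t\,\ln\ln\ln t}}=-1\quad\text{a.s.},
\]
and in particular $C_t/\ln t\to1$ a.s.

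Next I would transport these to $T_k$ by inversion, using that $t\mapsto C_t$ is nondecreasing with unit jumps. The almost sure statement $C_t/\ln t\to1$ forces $\ln T_k/k\to1$ a.s. For the central limit theorem one writes, for real $y$, $\Pv(\ln T_k-k>y\sqrt k)=\Pv(C_s<k)$ with $s=\e{k+y\sqrt k}$, rewrites $\{C_s<k\}=\{(C_s-\ln s)/\sqrt{\ln s}<-y\sqrt k/\sqrt{k+y\sqrt k}\}$, and uses $y\sqrt k/\sqrt{k+y\sqrt k}\to y$ to obtain $\Pv(\ln T_k-k>y\sqrt k)\to 1-\Phi(y)$, i.e.\ $(\ln T_k-k)/\sqrt k\Rightarrow N(0,1)$. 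For the law of the iterated logarithm the key identity is that the upper excursion $\{\ln T_k-k>a\}$ coincides with the lower excursion $\{C_s-\ln s<-a\}$ at $s=\e{k+a}$; when $a$ has the relevant order $\sqrt{k\ln\ln k}=o(k)$ one has $\ln s=k(1+o(1))$ and $\ln\ln\ln s=\ln\ln k+o(1)$, so the normalizer $\sqrt{2\ln s\,\ln\ln\ln s}$ equals $\sqrt{2k\ln\ln k}\,(1+o(1))$; feeding this and the trivial symmetry $\liminf x_n=-\limsup(-x_n)$ into the law just proved for $C_t$ yields
\[
\limsup_{k\to\infty}\frac{\ln T_k-k}{\sqrt{2k\ln\ln k}}=1,\qquad\liminf_{k\to\infty}\frac{\ln T_k-k}{\sqrt{2k\ln\ln k}}=-1\quad\text{a.s.}
\]

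Finally I would pass from $T_k$ to $\Delta_k$. Conditionally on $T_{k-1}=t$ one has $\{\Delta_k>m\}=\{I_{t+1}=\dots=I_{t+m}=0\}$, hence $\Pv(\Delta_k>m\mid T_{k-1}=t)=\prod_{i=t+1}^{t+m}(1-\tfrac1i)=t/(t+m)$; this telescoping identity shows, uniformly in $t$, that $\Pv(|\ln\Delta_k-\ln T_{k-1}|>z\mid T_{k-1}=t)\le 2\e{-z}$, and similarly that $\ln(T_k/T_{k-1})=\ln(1+\Delta_k/T_{k-1})$ has exponentially small tails. Taking $z=2\ln k$ and applying Borel--Cantelli gives $|\ln\Delta_k-\ln T_{k-1}|\le2\ln k$ eventually a.s.; since $2\ln k$ is $o(\sqrt k)$, $o(\sqrt{k\ln\ln k})$ and $o(k)$, the strong law and the law of the iterated logarithm for $\ln\Delta_k$ follow from those for $\ln T_k$, and the central limit theorem follows because $\ln(1+\Delta_k/T_{k-1})$ is bounded in probability and so is washed out by the $\sqrt k$ normalization.

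The hard part is the first two steps: one must invoke the non-i.i.d.\ form of Kolmogorov's law of the iterated logarithm and then carry it through the inversion, the delicate point being that the inverse relation is read off at times $t=\e{k+O(\sqrt{k\ln\ln k})}$ rather than exactly $\e{k}$, so one has to check that the normalizing sequence $\sqrt{2\ln t\,\ln\ln\ln t}$ is stable under this $o(k)$ perturbation of $\ln t$, which is precisely where the monotonicity of $C$ is used. An alternative, essentially R\'enyi's argument (and presumably close to \cite{HS}, \cite{Neu}, \cite{SH}), is to telescope $\ln T_k=\sum_{j=2}^{k}\ln(1+\Delta_j/T_{j-1})$ and note that, conditionally on $T_{j-1}$, the increment $\ln(1+\Delta_j/T_{j-1})$ differs from the $\mathrm{Exp}(1)$ law only by $O(1/T_{j-1})$; coupling these increments with an i.i.d.\ $\mathrm{Exp}(1)$ sequence (the cumulative discrepancy being a.s.\ summable since $T_{j-1}$ grows geometrically) reduces all three assertions to the Hartman--Wintner/Kolmogorov strong law, central limit theorem and law of the iterated logarithm for sums of i.i.d.\ exponentials.
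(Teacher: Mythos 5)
The paper does not give a proof of this theorem; it is stated as a known result with citations to Holmes--Strawderman \cite{HS} for the strong law, Neuts \cite{Neu} for the central limit theorem, and Strawderman--Holmes \cite{SH} for the law of the iterated logarithm, so there is no ``paper's own proof'' to compare against.

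Your sketch is nonetheless correct and reassembles all three statements from exactly the ingredients the paper itself develops. The record indicators $I_t$ are independent $BER(1/t)$ variables (derived in Section~\ref{800}), so the count $C_t=\sum_{i\le t}I_i$ has mean and variance $\ln t+O(1)$; since the increments are uniformly bounded while $\Vv C_t\to\infty$, the Lindeberg CLT and Kolmogorov's LIL apply to $C_t$. The passage from $C_t$ to $T_k$ via $\{T_k>t\}=\{C_t<k\}$ is standard, and you correctly identify the only delicate point: the inverse relation is read at $t=\e{k+O(\sqrt{k\ln\ln k})}$, where $\ln t=k(1+o(1))$, so the normalizer $\sqrt{2\ln t\,\ln\ln\ln t}$ is stable under that $o(k)$ perturbation of the exponent. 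The transfer to $\Delta_k$ rests on Tata's formula $\Pv(\Delta_k>m\mid T_{k-1}=t)=t/(t+m)$ (also used in the paper just above the theorem), which yields a uniform exponential tail for $\ln(\Delta_k/T_{k-1})$; Borel--Cantelli then gives $|\ln\Delta_k-\ln T_{k-1}|=O(\ln k)$ eventually a.s., negligible against $\sqrt k$, $\sqrt{k\ln\ln k}$ and $k$. One loose phrase worth tightening: Kolmogorov's LIL requires $|I_n-\tfrac1n|\le M_n$ with $M_n=o\bigl(s_n/\sqrt{\ln\ln s_n^2}\bigr)$, which here reads $1=o\bigl(\sqrt{\ln n/\ln\ln\ln n}\bigr)$ and does hold, but ``uniformly bounded'' alone is not the exact hypothesis and the check should be stated. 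Your alternative route, telescoping $\ln T_k=\sum_{j\le k}\ln(1+\Delta_j/T_{j-1})$ and coupling the increments with i.i.d.\ $\mathrm{Exp}(1)$ variables, is also valid and is closest in spirit to the arguments in the cited original papers.
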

Thus we see that the $k$-th record spends roughly $e^k$ time in the buffer.


\section{A natural generalization of the model}\label{420}

In this section we consider a common generalization of Barabási's model and Example \ref{321} which were not analyzed in the literature before. Also in this new model there is a list with $L$ tasks, the new tasks has i.i.d. non-negative priorities, and in each discrete time step one task is selected to execution and a new task arrives to it's place. The only change is that the dynamics of the system is now given by a general selection protocol. We will analyze this new model for the case $L=2$, so we define the selection protocol to this case. In every time step we select a task according to the priorities in the following way:
\be \label{504}
v(x,y):=\Pv (\text{new is chosen} \mid \text{new has priority $x$, old has priority $y$}),
\ee
where
\begin{assumption}\label{assump}
We assume that $\forall y\ v(.,y)$ is increasing and $\forall x\ v(x,.)$ is decreasing. Furthermore let us suppose that
\[ \forall x\ \forall y\ v(x,y)\leq c_1 < 1\] with some constant $c_1$.
\end{assumption}

Let  $O_t$ be the priorities of the old task  at time $t$. Clearly, the sequence $\left\{O_t\right\}_{t=1}^{\infty }$ is an aperiodic Markov-chain on the state space $(0,1)$.

\begin{lemma}\label{505}
Under Assumption \ref{assump}, the Markov-chain $\{ O_t\}_{t=1}^{\infty}$ defined above is positive recurrent.
\end{lemma}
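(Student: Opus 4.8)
The plan is to read off the one-step transition kernel of the Markov chain $\{O_t\}$ and to notice that the uniform bound $v\le c_1<1$ in Assumption~\ref{assump} immediately yields a Doeblin minorization on the \emph{entire} state space; positive recurrence is then a standard consequence, and no Lyapunov function is required.

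First I would record the kernel. If $O_t=y$, a new task of priority $N\sim R$ arrives; the old task survives, so $O_{t+1}=y$, exactly when the new task is selected, which by \eqref{507} happens with probability $q(y)=\int_0^1 v(z,y)\,dR(z)$, and otherwise the new task survives and $O_{t+1}=N$. Hence, for every $y\in(0,1)$ and every Borel set $A\subseteq(0,1)$,
\[
 P(y,A)=q(y)\,\ind_A(y)+\int_A\bigl(1-v(z,y)\bigr)\,dR(z).
\]

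The key step is the minorization. By Assumption~\ref{assump} we have $v(z,y)\le c_1<1$, hence $1-v(z,y)\ge 1-c_1>0$, and therefore
\[
 P(y,A)\ \ge\ (1-c_1)\int_A dR(z)\ =\ (1-c_1)\,R(A)\qquad\text{for all }y\in(0,1).
\]
So $P(y,\cdot)\ge(1-c_1)\,R(\cdot)$ uniformly in the starting point $y$: the whole state space $(0,1)$ is a small set with minorizing measure $R$ and constant $\delta=1-c_1>0$. Since $R(A)>0$ then forces $P(y,A)>0$ from every $y$, the chain is $R$-irreducible, and it is aperiodic as already noted, so an irreducible aperiodic chain whose full state space is small is uniformly ergodic; in particular it is positive recurrent, with a unique stationary distribution $R_1$. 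Equivalently, and more concretely, write $P(y,\cdot)=(1-c_1)R(\cdot)+c_1\,\widetilde Q(y,\cdot)$, where $\widetilde Q(y,\cdot):=\tfrac1{c_1}\bigl(P(y,\cdot)-(1-c_1)R(\cdot)\bigr)$ is a genuine transition kernel by the bound above; flipping an independent $\mathrm{BER}(1-c_1)$ coin at each step (heads: resample $O_{t+1}\sim R$ regardless of the past; tails: move according to $\widetilde Q$) exhibits the heads-times as regeneration epochs separated by i.i.d.\ $\mathrm{GEO}(1-c_1)$ gaps of finite mean $1/(1-c_1)$, which is positive recurrence.

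I do not expect a real obstacle here; the one point that requires care is recognizing that it is the hypothesis $v\le c_1<1$ — and not the monotonicity of $v$ — that produces the uniform minorization, and that minorizing the full state space in a single step already suffices, so the argument is short.
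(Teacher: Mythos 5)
Your proof is correct and the core probabilistic input is identical to the paper's: the uniform bound $v\le c_1<1$ in Assumption~\ref{assump} forces the old task to be replaced, independently of the past, by a fresh sample from $R$ with probability at least $1-c_1$ at every step. The paper uses this observation to stochastically dominate the hitting time $T_{x,y,\epsilon}$ of an interval $(y-\epsilon,y+\epsilon)$ by a geometric random variable with success probability $(1-c_1)\bigl(R(y+\epsilon)-R(y-\epsilon)\bigr)$, concluding that expected hitting times are finite and leaving the passage from there to positive recurrence on the continuous state space implicit. You instead package the same bound as a full-space Doeblin minorization $P(y,\cdot)\ge(1-c_1)R(\cdot)$, which is a marginally stronger and more standard formulation: it exhibits an explicit regeneration scheme (resample from $R$ at the geometric times of parameter $1-c_1$), and hence delivers positive Harris recurrence, uniqueness of the stationary law, and in fact uniform ergodicity in a single stroke. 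The two arguments rest on the same inequality read in two ways; yours is the cleaner and more complete route from that inequality to the conclusion, and you are right that the monotonicity hypotheses on $v$ play no role at all in this lemma.
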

First note that positive recurrence of the Markov chain implies the existence of a stationary distribution as well.
We remind the reader that $q(s)$ and $q_1(s)$ was defined in \eqref{507} and \eqref{5071}, and $r(x)$ is the density function of the new task. Since $\exists c_1$ $\forall x\ \forall y\ v(x,y)\leq c_1 < 1$, hence $\forall s\ q(s)=\int_{0}^{1}{v(y,s)dR(y)}\leq c_1<1$ and $\forall s\ q_1(s)\geq 1-c_1>0$.
\begin{proof}
For any $x\in supp(r)$ let us denote by $T_{x,y,\epsilon}$ the hitting time from $x$ to $(y-\epsilon,y+\epsilon)$. Thus the distribution of $T_{x,y,\epsilon}$ is stochastically dominated by the geometric distribution with success probability $(1-c_1)(R(y+\epsilon)-R(y-\epsilon))$. Therefore
\[
  \Ev(T_{x,y,\epsilon})\leq \frac{1}{(1-c_1)(R(y+\epsilon)-R(y-\epsilon))}<\infty.
\]
\end{proof}
Since the Markov chain is ergodic, this fact implies that Theorem \ref{thm::ergod} is valid in this case, and the expected waiting time of a task equals $2$.

\subsection{Distribution of the priorities}
In the following we write the density function $r_1(x)$ of the old task's priority at stationarity as a fix point of a Hilbert-Schmidt operator. Then using this fix point equation we show an opportunity of approximating $r_1(x)$, and then we approximate $r_1(x)$ of Example \ref{321}.

Let us consider the general case. We know from Lemma \ref{505} that stationarity distribution exists for the chain. The stationary equation \eqref{508} is
\[
  R_{1}(x)=\int_{0}^{x}{r_{1}(s)q(s)ds}+\int_{0}^{x}{r(s)q_{1}(s)ds}.
\]
After substituting \eqref{507} and \eqref{5071} we obtain
\[
  R_{1}(x)=\int_{0}^{x}{\int_{0}^{1}{(r_1(s)r(y)v(y,s)+r(s)r_1(y)(1-v(s,y)))dyds}}
\]
Differentiation with respect to variable $x$ yields the following equation for the stationary density function $r_1(x)$ of the old task's priority:
\[
  r_{1}(x)=\int_{y=0}^{1}{(r_1(x)r(y)v(y,x)+r(x)r_1(y)(1-v(x,y)))dy}
\]
With the constant $c_1$ from the definition of $v$ \eqref{504} we can write:
\[
  r_{1}(x)\left(1-\int_{y=0}^{1}{r(y)v(y,x)dy}\right)=r(x)\left( \int_{y=0}^{1}{r_1(y)(1-v(x,y))dy}-c_1\right)+c_1\:r(x).
\]
It is easy to see that the coefficient of $r_1(x)$ on the left side cannot be zero, hence
\be \ba\label{510}
  r_{1}(x)&=\int_{y=0}^{1}{\frac{r(x)(1-v(x,y)-c_1)}{1-\int_{z=0}^{1}{r(z)v(z,x)dz}}r_1(y)dy}\\
  &+\frac{c_1\:r(x)}{1-\int_{z=0}^{1}{r(z)v(z,x)dz}}.
\ea \ee

To shorten the notation we introduce
\be\label{511}
\ba
  &\alpha(x,y):=1-v(x,y)-c_1,\\
  &g(x):=\frac{r(x)}{1-\int_{z=0}^{1}{r(z)v(z,x)dz}},\\
  &K(x,y):=\alpha(x,y)g(x),\text{\ \ \ and}\\
  &f(x):=\frac{c_1\:r(x)}{1-\int_{z=0}^{1}{r(z)v(z,x)dz}}.
\ea\ee
Moreover we introduce the integral operator $A$ associated to kernel function $K$, i.e.
\[
  A\varphi(x):=\int_{y=0}^{1}{K(x,y)\varphi(y)dy}.
\]
Since $K\in L^2[0,1]\times [0,1]$, hence $A$ maps $L^2[0,1]$ into itself (see \cite[Theorem 9.2.1.]{KF}). With this notation, equation \eqref{510} can be written as
\[
(I-A)  r_{1}(x)=f(x).
\]

Hence, assuming that the operator $I-A$ is invertible for certain choice of the underlying distribution $R$ and the selection protocol $v$, we would like to determine the function $r_1(x)$ which satisfies
\be\label{512}
  r_1(x)=(I-A)^{-1}f(x)
\ee
If $\sum_{n=0}^{\infty}{A^{n}f(x)}$ converges we could write
\[
  r_1(x)=\sum_{n=0}^{\infty}{A^{n}f(x)}.
\]
It turns out that $A^n$ is also an integral operator with some kernel function $K_n$, which is
\[
  K_{n}(x,y)=\underbrace{\int_{0}^{1}\dots\int_{0}^{1}}_{n-1}{{K(x,u_{1})K(u_{1},u_{2})\dots K(u_{n-1},y)du_{1}\dots du_{n-1}}}.
\]
Note that we can benefit from the fact that $f(x)=c_1 g(x)$ (see equation \eqref{511}), so we get
\[\ba
  A^{n}f(x)&=A^{n}g(x)c_1=c_1\int_{0}^{1}{K_{n}(x,y)g(y)dy}\\
&=c_1\int_{0}^{1}\dots\int_{0}^{1}{{\alpha(x,u_{1})g(x)\dots \alpha(u_{n-1},t)g(u_{n-1})g(y)du_{1}\dots du_{n-1}dy}}\\
&=c_1\:g(x)\underbrace{\int_{0}^{1}\dots\int_{0}^{1}{\underbrace{\alpha(x,u_{1})g(u_{1})}_{\widetilde{K}(x,u_{1})}\dots \alpha(u_{n-1},u_{n})g(u_{n})du_{1}\dots du_{n}}}_{H_{n}(x)}.
\ea\]
Thus we see that it is useful for us to introduce another kernel function given by
\be \label{def::kernel_k_tilde}
\widetilde{K}(x,y):=\alpha(x,y)g(y),
\ee
and the corresponding integral operator
\be \label{def::operator_a_tilde}
\widetilde{A}h(x):=\int_{0}^{1}{\widetilde{K}(x,y)h(y)dy},
\ee
which maps $L^2[0,1]$ into itself. With this notations, the terms $A^n f(x)$ can be obtained as $c_1g(x) H_n(x)$, where
\[\ba
H_{n}(x)&=\int_{0}^{1}\dots\int_{0}^{1}{\alpha(x,u_{1})g(u_{1})\dots \alpha(u_{n-1},u_{n})g(u_{n})du_{1}\dots du_{n}}\\
&=\int_{0}^{1}\dots\int_{0}^{1}{\widetilde{K}(x,u_{1})\dots \widetilde{K}(u_{n-1},u_{n})du_{1}\dots du_{n}}=\widetilde{A}^{n}\mathds{1}(x).
\ea\]
So we see that \eqref{512} is equivalent to
\be\label{408}
r_1(x)=c_1\:g(x)(1+H_{1}(x)+H_{2}(x)+\dots),
\ee
which yields:
\[
r_1(x)\approx c_1\:g(x)(1+H_{1}(x)+H_{2}(x)+\dots +H_n(x)).
\]

\subsection{Example}\label{4002}

In the following we consider Example \ref{321}. Recall that in this case, priority service happens with probability $p$, and in this case a task is chosen proportional to its priority, i.e. $v(x,y)= p x/(x+y) + (1-p)/2$. In the sequel, using the software \emph{Wolfram Mathematica}, we determine $r_{1}(x)$ numerically for this particular choice with some fixed parameters. We compute the terms in \eqref{408} and  give estimates on $H_n(x)$. From these we can derive estimates on parameters $(p,c)$ where the series in \eqref{408} converges. Before doing these, we argue shortly that shifting the distribution of the new task's priority up by $c$ has a real relevance in the literature. To see this, note that if $X$, $Y\sim U[c,1]$, and $V$, $W\sim U[0,1]$, then
\[
\frac{X}{X+Y} \sim \frac{V(1-c)+c}{V(1-c)+c+W(1-c)+c} \sim \frac{V+\delta}{V+W+2\delta},
\]
where $\delta=\frac{c}{1-c}\geq 0$. This protocol is similar to the growth rule of the preferential attachment model, see \cite{BA,BRST,BR}.

Recall the definition of the kernel function $\widetilde K$ and the corresponding operator $\widetilde A$. We have seen that $H_n(x)=\widetilde{A}^{n}\ind(x)$. So in order to prove convergence of \eqref{408} it is enough to give an upper bound on the $L^2$ norm of $H_n(x)$. This can be achieved by estimating the $L^2$ norm of $\widetilde{A}$ or the Hilbert-Schmidt norm of $\widetilde K$:
\[
\| H_n(x)\|_2\leq \| \widetilde{A}\|_2^n\leq \| \widetilde{K}\|_{HS}^n,
\]
where $\| \widetilde{K}\|_{HS}^2=\int_c^1{\int_c^1{\widetilde{K}^2(x,y)dy}dx}$.
If $\| \widetilde{K}\|_{HS}^2<1$ then the sum in equation \eqref{408} converges, so it remains to find the parameter region $(p,c)$ for which this holds.
Using the definitions given in Example \ref{321} and in equations \eqref{511} and \eqref{def::kernel_k_tilde} we get that
\[ \begin{aligned}
\| \widetilde{K}\|_{HS}^2&=\int_c^1{\int_c^1{\left( \frac{y}{y+x}\:\frac{1}{\frac{1+p}{2p}-\left(1-\frac{x}{1-c}\ln \frac{1+x}{c+x}\right)}\:\frac{1}{1-c}\right)^2dxdy}}\\
&=\int_c^1{t^2\left( \frac{1}{y+c}-\frac{1}{y+1}\right) \left( \frac{1}{\frac{1+p}{2p}-\left(1-\frac{x}{1-c}\ln \frac{1+x}{c+x}\right)}\:\frac{1}{1-c}\right)^2dy}.
\end{aligned} \]
The expression increases in $p$ since $\frac{1+p}{2p}$ in the denominator decreases. This implies that once we have found a pair $(p,c)$ for which $\| \widetilde{A}\|_{HS}^2<1$  then this inequality also holds for all pairs $(q,c)$ where $0<q\le p$. The region of pair of parameters for which $\| \widetilde{A}\|_{HS}^2<1$ can be determined numerically, and it is represented on Fig. \ref{409}. In addition, Fig. \ref{410} and \ref{411} illustrates the density $r_1(x)$ and  the distribution of $\tau$ on a log-log plot for three different choice of parameters, respectively.

  \begin{figure}
    \includegraphics[width=\textwidth]{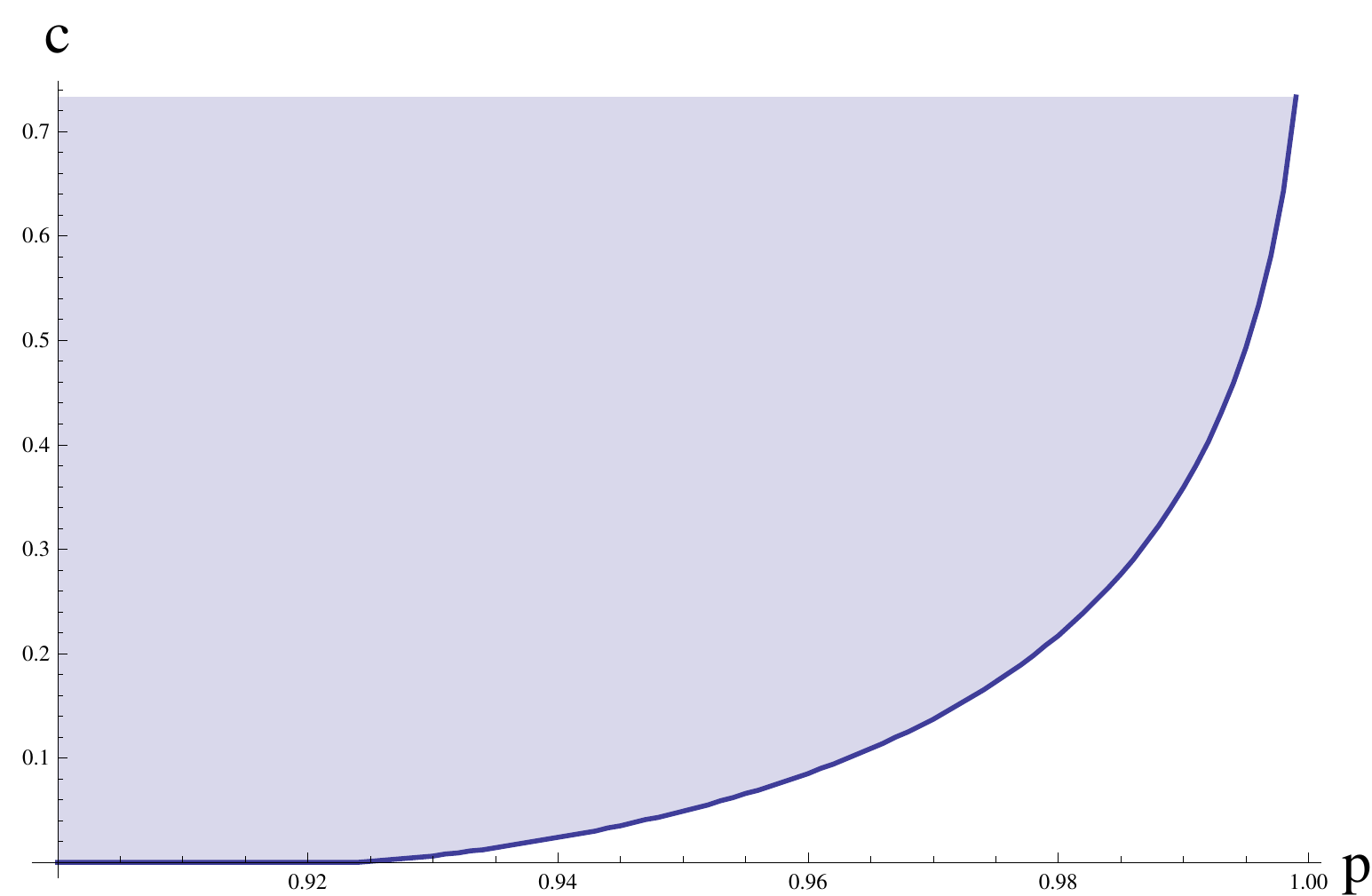}
    \caption{The set of parameters $(p,c)$ where we can guarantee convergence in \eqref{408}.}\label{409}
  \end{figure}

  \begin{figure}
    \includegraphics[width=\textwidth]{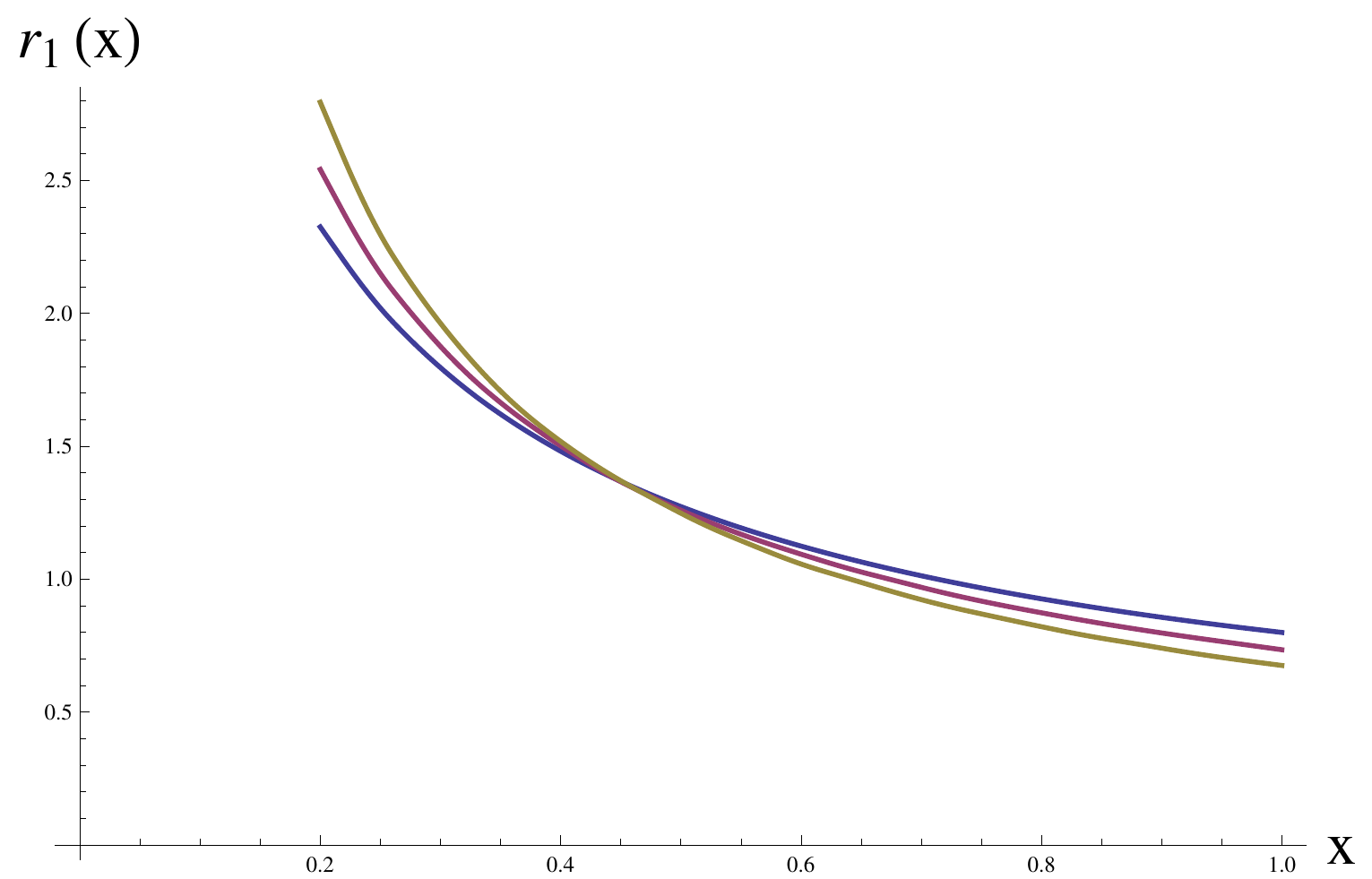}
    \caption{The DF of the old task's priority in the stationary case with $c=0.2$ and different values of $p$. Blue: $p=0.7$. Red: $p=0.8$. Mustard: $p=0.9$. }\label{410}
  \end{figure}

  \begin{figure}
    \includegraphics[width=\textwidth]{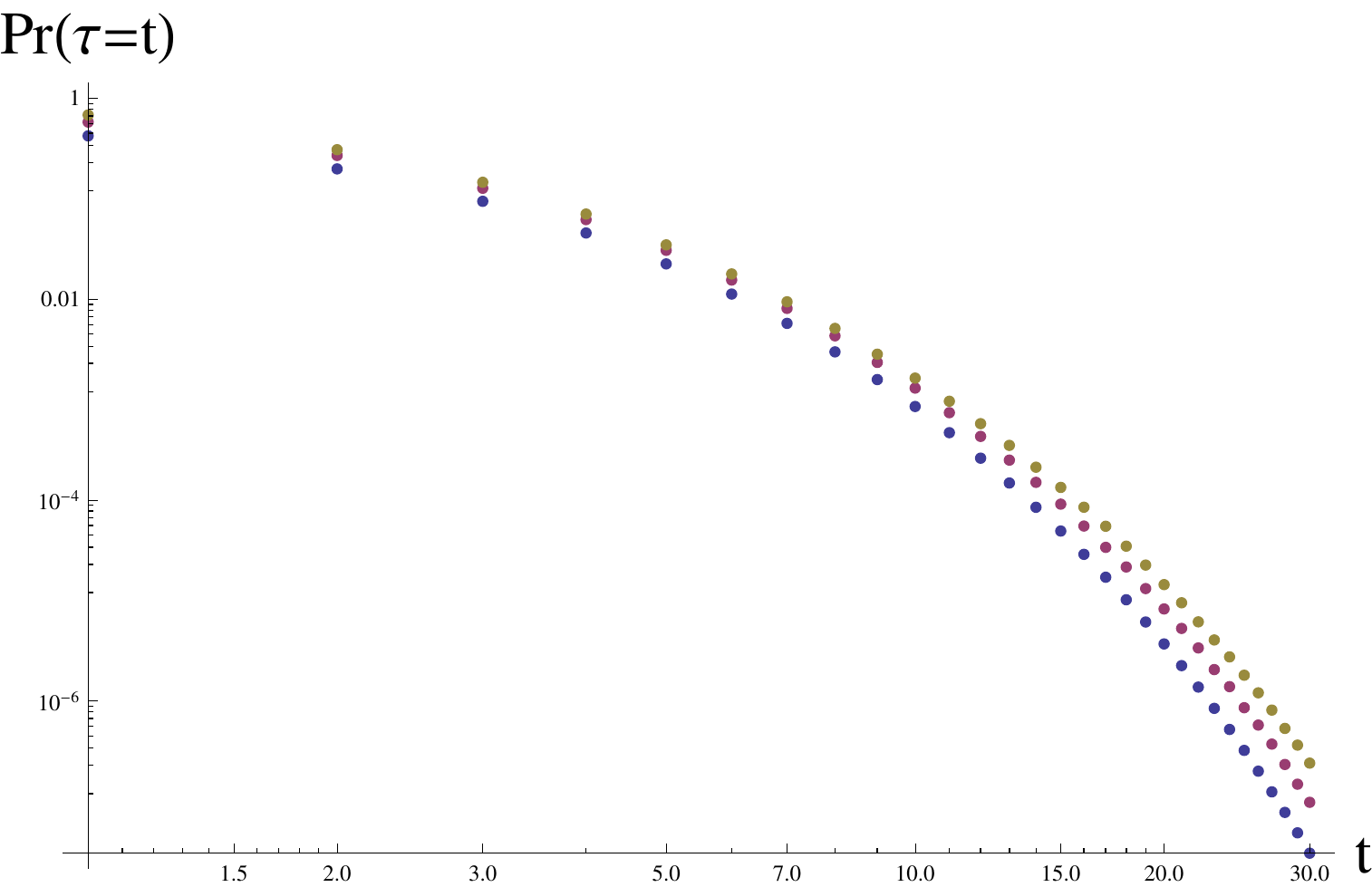}
    \caption{The distribution of $\tau$, the waiting time of a new task in the stationary case, with $c=0.2$ and different values of $p$ drawn on a log-log plot. Blue: $p=0.7$. Red: $p=0.8$. Mustard: $p=0.9$.}\label{411}
  \end{figure}

Now we give estimates on the distribution of $\tau$ when $k>1$, which is given in equation \eqref{310}.
In this example the functions $q(x)$ and $q_1(x)$ (defined in \eqref{507} and \eqref{5071}) can be written in the following form:
\[ \ba
&q(x)=\frac{p}{1-c}\int_{c}^{1}{\frac{y}{x+y}dy}+\frac{1-p}{2}=\frac{1+p}{2}-\frac{px}{1-c}\:\ln \frac{1+x}{c+x}\\
&q_{1}(x)=p\int_{c}^{1}{\frac{y}{x+y}dR_{1}(y)}+\frac{1-p}{2}.
\ea \]
It is easy to see that $q'(x)\neq 0$, so we can write
\[ \ba
\mathbb{P}(\tau=k)&=\int_{c}^{1}{q_{1}(x)(1-q(x))q(x)^{k-2}dR(x)}\\
&=\int_{c}^{1}{\underbrace{\frac{1}{1-c}\:\frac{q_{1}(x)(1-q(x))}{-q'(x)}}_{:=L(x)}(-q'(x))q(x)^{k-2}dx}.
\ea \]
With the new notation $L(x):=-\frac{1}{1-c}\:\frac{q_{1}(x)(1-q(x))}{q'(x)}$, we claim that  $L(x)$ can be bounded from below and from above by some constants $m, M$ only depending only on $(p,c)$:
\be \label{eq::lx_bounds}
m_{p,c}\leq L(x)\leq M_{p,c},\ee
since 
\[\begin{aligned}
\frac{1-p}{2}+pc\leq q_1(1) & \leq q_1(x)\leq 1, \\
\frac{1-p}{2}+pc\ln \frac{1}{2c} \leq 1-q(c) & \leq 1-q(x)\leq 1, \\
\frac{1-c}{\ln \frac{1+c}{2c}} \leq -\frac{1}{q'(c)} & \leq -\frac{1}{q'(x)}\leq -\frac{1}{q'(1)}\xrightarrow{(p,c)\to(1,0)} \ln 2 +\frac12.
\end{aligned}\]
Thus if we set
\[
m_{p,c}:=-\frac{q_1(1)(1-q(c))}{(1-c)q'(1)}\text{, and }
M{p,c}:=-\frac{1}{(1-c)q'(1)}.
\] then the bound \eqref{eq::lx_bounds} follows.
With these bounds, one can easily estimate $\mathbb{P}(\tau=k)$:
\[
\mathbb{P}(\tau=k)\leq M(p,c)\int_{c}^{1}{q'(x)q(x)^{k-2}dR(x)}=M(p,c)\frac{1}{k-1}(q^{k-1}(c)-q^{k-1}(1))\]
and similarly \[ \mathbb{P}(\tau=k)\ge m(p,c)\frac{1}{k-1}(q^{k-1}(c)-q^{k-1}(1)).\]
In the limit $(p,c)\to (1,0)$ we get
\[
\lim_{(p,c)\to (1,0)}\mathbb{P}(\tau=k)= \Omega \left( \frac{\frac{1-p}{2}+c\ln \frac{1}{2c}}{\ln \frac{1}{2c}}\left( \frac{1-p}{2}+c\right)\right)(1-(1-\ln 2)^{k-1})\frac{1}{k-1}
\]
Since $q(c)>q(1)$, hence
\be \label{412}
\mathbb{P}(\tau=k)\sim const\cdot\frac1kq^k(c)=const\cdot\frac1ke^{-k/k_0},
\ee
where
\[
k_0=-\frac{1}{\ln \left( q(c)\right) }.
\]
This means that the probability that the waiting time is of length $k$ behaves approximately as $\frac1k$ until $k<k_0$, i.e. it obeys a power law-like behavior. Furthermore, if $c\neq 0$ and $(p,c)\to (1,0)$, then $k_0\to \infty$, i.e. the exponential cutoff in equation \eqref{412} shifts to infinity, and thus for $(p,c)$ values close to $(1,0)$ the distribution of $\tau$ will be close to a power-law distribution.

\section{Summary}
In this paper we investigated and generalized the priority queueing model of Barabási. We showed that in the original model of Barabási, the system is ergodic and irreducible as a Markov chain once the priority selection probability is  separated away from $1$. Further, we gave a more probabilistic approach to compute the distribution of the priority of the old task in the system if the buffer length equals $2$, and determined the average waiting time of a task in the system for arbitrary buffer length. We investigated the case $p=1$ separately and found that the model is equivalent to the processes of records. Next, we generalized the priority queueing model with an arbitrary selection protocol depending on the priorities in the system and some extra randomness. We found that the system is ergodic and irreducible once the selection protocol is separated away from $1$. Further, we gave a description of the density of priority of the old task in terms of Hilbert-Schmidt operators. We investigated a special example when the priority selection is done proportionally to the priorities of the tasks in more detail.  Namely, using bounds and approximation methods based on the operator approach, we gave a region of pairs $(p,c)$ where the Hilbert-Schmidt description of the density function is converging and determined the waiting time distribution in this case.

\bibliographystyle{plain}

\end{document}